%% LyX 2.1.4 created this file.  For more info, see http://www.lyx.org/.
%% Do not edit unless you really know what you are doing.
%  \documentclass{rmmcart}
\documentclass[11pt]{amsart}
\usepackage[T1]{fontenc}
\usepackage[latin9]{inputenc}
\usepackage{amsmath}
\usepackage{amsthm}
\usepackage{amssymb}
\usepackage{esint}
\usepackage{xcolor}

\makeatletter
%%%%%%%%%%%%%%%%%%%%%%%%%%%%%% Textclass specific LaTeX commands.
\theoremstyle{plain}
\newtheorem{thm}{\protect\theoremname}
\theoremstyle{definition}
\newtheorem{defn}[thm]{\protect\definitionname}
\theoremstyle{plain}
\newtheorem{prop}[thm]{\protect\propositionname}
\theoremstyle{plain}
\newtheorem*{thm*}{\protect\theoremname}
\theoremstyle{plain}
\newtheorem*{cor*}{\protect\corollaryname}

%%%%%%%%%%%%%%%%%%%%%%%%%%%%%% User specified LaTeX commands.

\newcommand{\fH}{\mathcal{H}}
\newcommand{\fK}{\mathcal{K}}
\newcommand{\fM}{\mathcal{M}}

\newcommand{\fL}{\mathcal{L}}
\newcommand{\myomega}{t}

\usepackage[english]{babel}
\providecommand{\definitionname}{Definition}
\providecommand{\propositionname}{Proposition}
\providecommand{\theoremname}{Theorem}
\providecommand{\corollaryname}{Corollary}

%% Title author date
\title[POVMs and Densely-Defined OVFs]{Positive Operator-Valued Measures and Densely-Defined Operator-Valued Frames}

\author{Benjamin Robinson}
     \address{Benjamin Robinson, US Air Force Research Lab, 2241 Avionics Cir. \\ WPAFB, OH 45433, USA}
     \email{benjamin.robinson.8@us.af.mil}
     \thanks{This work was funded in part by the U.S. Air Force Office of Scientific Research under Grant No. FA9550-12-1-0418 and Grant No. FA9550-12-1-0225.}

     \author{Bill Moran}
     \address{Department of Electrical \& Electronic Engineering, The University of Melbourne, Parkville, 3010, Australia}
     \email{wmoran@unimelb.edu.au}
%     \thanks{ ... }
     
     \author{Doug Cochran}
     \address{Douglas Cochran, School of Mathematical and Statistical Sciences, Arizona State University, Tempe, AZ 85287-1804 USA}
     \email{cochran@asu.edu}
%     \thanks{ ... }

     \date{April 7, 2020}

     \keywords{Frames, g-Frames, Operator-valued frames, Positive operator-valued measures, Radon-Nikodym theorem}
     \subjclass{42C15: ``General harmonic expansions, frames''}

     \begin{document}
     \begin{abstract}
%     Frame theory is the study of analysis and reconstruction in Hilbert space. Positive operator-valued measures (POVMs) are designed to resolve a Hilbert-space vector into components for the purpose arise as generalized analysis mechanisms in quantum theory.  
In the signal-processing literature, a frame is a mechanism for performing analysis and reconstruction in a Hilbert space.  By contrast, in quantum theory, a positive operator-valued measure (POVM) decomposes a Hilbert-space vector for the purpose of computing measurement probabilities. 
     Frames and their most common generalizations can be seen to give rise to POVMs, but does every reasonable POVM arise from a type of frame?  In this paper we answer this question using a Radon-Nikodym-type result.
     \end{abstract}
     \maketitle

\section{Introduction}

Originally studied in quantum mechanics, positive operator-valued measures (POVMs) have recently been suggested in the signal-processing literature as a natural general framework for the analysis and reconstruction of square-integrable functions \cite{moran2013positive,han2014operator,robinson2014operator,li2017radon}.  Examples include frames \cite{duffin1952class,daubechies1986painless,christensen2002introduction}, g-frames \cite{sun2006g,kaftal2009operator,han2011operator}, continuous frames \cite{ali1993continuous}, and the overarching continuous g-frames \cite{abdollahpour2008continuous}, all of which we will call simply operator-valued frames (OVFs).  The precise relationship between OVFs and POVMs is captured by a Radon-Nikodym-type theorem \cite[Theorem~3.3.2]{robinson2014operator}.  But this theorem applies only to POVMs with sigma-finite total variation.  The purpose of this paper is to show that the above relationship extends to non-sigma-finite POVMs and densely-defined OVFs.

Even the sigma-finite case has been attempted by many authors. In finite dimensions, for example, Chiribella et al. recently solved the problem using barycentric decompositions \cite{chiribella2010barycentric}.  In infinite dimensions, extra assumptions have traditionally been used, such as the finiteness of the POVM's trace \cite{berezanskiui1995spectral} or the weak compactness of its range \cite[VI.8.10]{dunford1967linear}.
%Even the sigma-finite case has been the subject of much attention.  For example, Chiribella et al. recently used complicated barycentric methods to solve the finite-dimensional sub-case \cite{chiribella2010barycentric}.  In the infinite-dimensional case, extra assumptions have traditionally been used, such as the finiteness of the POVM's trace \cite{berezanskiui1995spectral} or the weak compactness of its range \cite[VI.8.10]{dunford1967linear}.  
One of the main complications in this case is that the space of bounded linear operators on an infinite-dimensional Hilbert space does not have the so-called Radon-Nikodym property \cite{barcenas2003radon}.  Another reason the problem is challenging is that it requires proving the conclusion of Naimark's Theorem \cite{neumark1940spectral,neumark1943representation}.  The non-sigma-finite case is at least as hard, and as far as we know no one has yet attempted it.

The paper is organized as follows.  In Section~\ref{sec:the-question}, we make the relationship we wish to establish precise.  Then, in Section~\ref{sec:main-result} we prove a Radon-Nikodym-type theorem from which the desired relationship follows.

\section{The Question} \label{sec:the-question}

Let $\fH$ be a separable complex Hilbert space throughout, whose inner product is conjugate-linear in its second variable.  A \emph{g-frame} for $\fH$ is
a sequence of bounded operators $T_{1},T_{2},\dots$ mapping $\fH$
into the sequence of separable Hilbert spaces $\fK_{1},\fK_{2},\dots$
such that the operator $T:x\in\fH\mapsto\left\{ T_{i}x\right\} _{i\ge1}$
maps into $\bigoplus_{i\ge1}\fK_{i}$ and is bounded above and below
\cite{sun2006g}. A related concept is the concept of a \emph{frame}
for $\fH$. This is usually defined as follows.
\begin{defn}
A sequence $x_{1},x_{2},\dots$
in $\fH$ is said to be a \emph{frame }for $\fH$ if the operator
$T:x\in\fH\mapsto\left\{ \left\langle x,x_{i}\right\rangle \right\} _{i\ge1}$
maps into $\ell^{2}(\mathbb{N})$ and is bounded above and below,
or equivalently, if there are $A,B>0$ such that
\[
A\left\Vert x\right\Vert _{\fH}^{2}\le\sum_{i\ge1}\left|\left\langle x,x_{i}\right\rangle \right|^{2}\le B\left\Vert x\right\Vert _{\fH}^{2}
\]
for all $x\in\fH$. The numbers $A$ and $B$ are called the \emph{frame bounds}.% A frame is called \emph{tight }if $A=B$.
\end{defn}
Using the self-duality of Hilbert spaces, we can easily see that every
frame $\{x_{i}\}_{i\ge1}$ for $\fH$ can be uniquely expressed as
the g-frame $\{T_{i}\}_{i\ge1}$ for $\fH$ with $T_{i}=\left\langle \cdot,x_{i}\right\rangle $,
so that the concept of a g-frame subsumes the concept of a frame.
For examples of frames of interest, see \cite{duffin1952class,daubechies1986painless,daubechies1992ten,christensen2002introduction}.
For examples of g-frames of interest that are not frames, see Examples
2.3.7 and 2.3.8 in \cite{robinson2014operator}. 

The importance of frames and g-frames is that they provide a framework
for \emph{analysis, synthesis, }and \emph{reconstruction} of a function.
The operator $T$ above is usually referred to as the \emph{analysis
operator}. \emph{Reconstruction} can be performed if we can calculate
the result of applying $S^{-1}T^{*}$ to $Tx$, where $T^{*}$ is
the adjoint of $T$ and $S=T^{*}T$ is known as the \emph{frame operator}. The intermediate step of applying
$T^{*}$ to $Tx$ is called \emph{synthesis. }In practice, if one
knows only $T$ and $Tx$, one way to recover $x$ without directly inverting
$S$ is by way of the so-called \emph{frame algorithm}:
\begin{prop}
\cite{grochenig1993acceleration} Let $\{x_{i}\}_{i\ge1}$ be a frame
for $\fH$ with frame bounds $A,B>0$. Then every $x\in\fH$ can be
reconstructed from $T$ and $Tx$ alone using the iteration
\[
x^{(n)}=x^{(n-1)}+\frac{2}{A+B}S\left(x-x^{(n-1)}\right),
\]
for $n\ge 1$ with $x^{(0)}=0$. Further, $x^{(n)}\to x$ according to
\[
\left\Vert x-x^{(n)}\right\Vert \le\left(\frac{B-A}{B+A}\right)^{n}\left\Vert x\right\Vert .
\]
\end{prop}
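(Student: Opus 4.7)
The plan is to rewrite the iteration in terms of the error $e^{(n)} = x - x^{(n)}$, show that $e^{(n)} = (I - \alpha S)\,e^{(n-1)}$ where $\alpha = 2/(A+B)$, and then bound the operator norm of $I - \alpha S$ by $(B-A)/(B+A)$.

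First I would translate the frame bounds into the operator inequality $A\,I \le S \le B\,I$ on $\fH$, using the identity $\langle Sx, x\rangle = \langle T^*Tx, x\rangle = \|Tx\|^2 = \sum_{i\ge1}\left|\langle x,x_i\rangle\right|^2$. Since $S$ is bounded, self-adjoint and non-negative, this quadratic-form inequality yields $\sigma(S)\subset[A,B]$ via the spectral theorem.

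Next I would verify that $\alpha = 2/(A+B)$ is precisely the value that symmetrically balances the two endpoints of this interval: the image of $[A,B]$ under $t\mapsto 1-\alpha t$ is $[-(B-A)/(B+A),\,(B-A)/(B+A)]$. Hence by the spectral mapping theorem, $\|I - \alpha S\|_{\mathrm{op}} \le (B-A)/(B+A)$, a quantity strictly less than one because $A>0$. Subtracting $x$ from both sides of the defining iteration and using linearity of $S$ then gives the error recursion $x - x^{(n)} = (I - \alpha S)(x - x^{(n-1)})$; iterating this and invoking $x^{(0)}=0$ yields $x - x^{(n)} = (I - \alpha S)^n x$, from which the stated norm bound follows.

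I do not expect any substantive obstacle here, since the argument is essentially a contraction-mapping estimate coming from the spectral picture of $S$. The only point that warrants a comment is the claim that the reconstruction uses only $T$ and $Tx$ and not $x$ itself: this holds because $S(x - x^{(n-1)}) = T^{*}(Tx) - T^{*}T\,x^{(n-1)}$, so each iterate can be computed from the pre-stored vector $T^{*}(Tx)$ together with the previous iterate, without ever accessing $x$ directly.
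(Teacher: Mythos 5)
Your proof is correct. The paper itself gives no proof of this proposition---it is quoted directly from the cited reference \cite{grochenig1993acceleration}---so there is nothing to compare against; your argument is the standard one. The key steps all check out: the frame inequality is exactly the quadratic-form statement $A\,I \le S \le B\,I$ for the self-adjoint operator $S=T^{*}T$, the error recursion $x-x^{(n)}=\left(I-\tfrac{2}{A+B}S\right)\left(x-x^{(n-1)}\right)$ follows by subtracting the iteration from $x$, and the norm bound $\left\Vert I-\tfrac{2}{A+B}S\right\Vert \le \tfrac{B-A}{B+A}$ follows from the spectral picture as you describe (or, slightly more elementarily, from the fact that a bounded self-adjoint operator $R$ with $-cI\le R\le cI$ satisfies $\Vert R\Vert\le c$, avoiding any explicit appeal to the spectral mapping theorem). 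Your closing remark about computability from $T$ and $Tx$ alone, via $S(x-x^{(n-1)})=T^{*}(Tx)-T^{*}Tx^{(n-1)}$, is exactly the point that justifies the phrase ``reconstructed from $T$ and $Tx$ alone'' and is worth having made explicit.
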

\noindent This algorithm applies equally well to frames and g-frames.

Another paradigm for analysis and synthesis is that of \emph{continuous
frames} \cite{ali1993continuous}, which we now describe.
\begin{defn}
Let $(\Omega,\Sigma,\mu)$ be a measure space and let $\{x_{\myomega}\}_{\myomega\in\Omega}\subset\fH$
be such that for all $x\in\fH$, the map $\myomega\in\Omega\mapsto\left\langle x,x_{\myomega}\right\rangle $
is measurable. Then $\left(\mu,\{x_t\}_{\myomega\in\Omega}\right)$ is a \emph{continuous frame} if $T:x\in\fH\mapsto\left\{ \left\langle x,x_{\myomega}\right\rangle \right\} _{\myomega\in\Omega}$
maps into $L^{2}(\mu)$ and is bounded above and below. This boundedness condition can also be expressed by saying there are constants $A,B>0$ such that
\[
A\left\Vert x\right\Vert _{\fH}^{2}\le\int_{\Omega}\left|\left\langle x,x_{\myomega}\right\rangle \right|^{2}\,d\mu(\myomega)\le B\left\Vert x\right\Vert _{\fH}^{2}
\]
for all $x\in\fH$.
\end{defn}
\noindent Examples of interest occur
in wavelet and Gabor analysis \cite{daubechies1986painless,daubechies1992ten}. 

Encompassing both continuous frames and g-frames is the overarching concept of
a \emph{continuous g-frame} \cite{abdollahpour2008continuous}.
We will generalize this concept slightly and call the result an operator-valued frame.  (We note that our terminology differs from the term ``operator-valued frame'' in \cite{kaftal2009operator}, which refers to what we call here g-frames.)
%We overload the term \emph{operator-valued} \emph{frame} (OVF), compared
%to its usage in \cite{kaftal2009operator}, to refer to a slight generalization
%of continuous g-frames:
\begin{defn}
Let $(\Omega,\Sigma,\mu)$ be a measure space. Let $\fM$ be a dense linear
subspace of $\fH$. Let $\{\fK(\myomega)\}_{\myomega\in\Omega}$ be a $\Sigma$-measurable
family of Hilbert spaces\footnote{See \cite{dixmier2011neumann} or \cite{robinson2014operator} for the definition of this and  the direct integral $\int^\oplus_\Omega \fK(\myomega)\, d\mu(\myomega)$ of these spaces.}
and $T(\myomega):\fM\to\fK(\myomega)$ for every $\myomega\in\Omega$ be a
family of linear maps such that for every $x\in \fM$ both $\{T(\myomega)x\}_{\myomega\in\Omega}\in\int_{\Omega}^{\oplus}\fK(\myomega)\,d\mu(\myomega)$
and the map $T:x\in \fM\mapsto\{T(\myomega)x\}_{\myomega\in\Omega}$ is
bounded above and below. Then we say $\left(\mu,\fM, \{T(\myomega)\}_{\myomega\in\Omega}\right)$
is an \emph{operator-valued frame}. This boundedness condition can be expressed by saying that there are $A,B>0$ such that
\[
A\left\Vert x\right\Vert _{\fH}^{2}\le\int_{\Omega}\left\langle T(t)^* T(t)x,x\right\rangle\,d\mu(\myomega)\le B\left\Vert x\right\Vert _{\fH}^{2},
\]
for every $x\in\fM$.
\end{defn}
\noindent Examples of operator-valued frames that are neither continuous frames
nor g-frames arise from the Plancherel theorem for non-compact non-commutative groups, for which examples $A=B$.  (See \cite[Equation~7.46]{gb1995course} for more information.)

If $\left(\mu,\fM, \{T(\myomega)\}_{\myomega\in\Omega}\right)$ is an operator-valued frame
for $\fH$, then the frame operator 
\begin{equation}
S=\int_{\Omega}T(\myomega)^{*}T(\myomega)\,d\mu(\myomega) \nonumber
\end{equation}
 may be defined as before. Here, $S$ is interpreted as an operator
that satisfies
\[
\left\langle Sx,y\right\rangle =\int_{\Omega}\left\langle T(\myomega)^{*}T(\myomega)x,y\right\rangle \,d\mu(\myomega)
\]
for all $x,y\in \fM$. By the boundedness of $T$ and the density of
$\fM$, such an operator exists and is unique. As a result, both the frame algorithm and the procedure of applying
$S^{-1}T^{*}$ to $Tx$ for $x\in \fM$ recover $x$, as before.

A related concept is that of a positive operator-valued measure (POVM).
POVMs represent measurements that occur in open quantum
systems and generalize the concept of a projection-valued measure or von Neumann measurement.  Formally, if $(\Omega,\Sigma)$ is
a measurable space, a POVM is a map from $\Sigma$ to $\fL^+(\fH)$,
the positive semi-definite bounded operators on $\fH$, which is $\sigma$-additive in the
weak operator topology. That is, it is a tuple $(\Omega,\Sigma,M)$, where $M:\Sigma\to\fL^+(\fH)$ is a map such
that
\begin{itemize}
\item $M(\varnothing)=0$ and
\item if $E_{1},E_{2},\dots$ are pairwise disjoint members of $\Sigma$
and $x,y\in\fH$, then 
\[
\left\langle M\left(\cup_{i=1}^{\infty}E_{i}\right)x,y\right\rangle =\sum_{i=1}^{\infty}\left\langle M\left(E_{i}\right)x,y\right\rangle .
\]
\end{itemize}
If, in addition, $M(\Omega)$ is invertible, we say $M$ is a \emph{framed POVM}. We will say that an OVF $\left(\mu, \fM, \{T(\myomega)\}_{\myomega\in\Omega} \right)$ \emph{gives rise to} the 
POVM $M$ if
\begin{equation*} 
\left\langle M(E)x,y\right\rangle =\int_{E}\left\langle T(\myomega)^{*}T(\myomega)x,y\right\rangle \,d\mu(x)
\end{equation*}
for all $x,y\in \fM$ and all $E\in\Sigma$.  

%Every OVF gives rise to a framed POVM.  Indeed, we may define $M$ via (\ref{eq:povm-ovf}) and extend to $\fH$ via continuity. The only real task, then, is to show that $M$ is sigma-additive. For this, let $E_i$ be a sequence of pairwise disjoint members of $\Sigma$.  Let $f(x) = \sum_{i=1}^\infty \left\langle M(E_i)x,x\right\rangle$ and $g(x) = \left\langle M\left( \cup_{i=1}^\infty E_i \right) x, x\right\rangle$ for $x\in\fH$.  It is easily seen that $f$ and $g$ agree on $\fM$. Further, by Fatou's lemma, we have $f(x)\le g(x)$ for all $x\in\fH$.  Thus, using the triangle inequality, if $y\in\fH$, then $\left| f(x) - f(y) \right| \le g(x-y)$.  Since the latter is bounded by a constant times $\left\Vert x-y \right\Vert^2$, $f$ and $g$ agree on $\fH$.  The result follows from polarization.

%It is easily seen that if an OVF $(\mu,\fH,\{T(t)\}_{t\in\Omega})$ gives

It is easily seen that every g-frame and every continuous frame give rise to a framed POVM.  With a little more work, it can be shown that every OVF gives rise to a framed POVM.  The question of this paper is the converse question: ``Does every framed POVM arise from an OVF?''

\section{Main Result} \label{sec:main-result}

For this section, recall that a closed operator on $\fH$ is a map $A:D(A)\to \fH$ with a closed graph, where $D(A)$ is a dense linear subspace of $\fH$.  In other words, it is a map for which if $x_n \in D(A) \to x\in \fH$ and $Ax_n$ converges to $y\in \fH$, then $x\in D(A)$ and $y=Ax$.

The answer to the question of the last section hinges on the following Radon-Nikodym-type theorem.

\begin{thm*}
Suppose $(\Omega,\Sigma, M)$
is a POVM. Suppose that $\fM$ is a dense linear manifold in $\fH$
and that for each $x\in\fM$, the total variation of the vector measure
$\mu_{x}:E\in\Sigma\mapsto M(E)x$ is sigma-finite. Then there is
a sigma-finite measure $(\Omega,\Sigma,\mu)$ and a positive closed
operator-valued function $Q(t):\fM\rightarrow\fH$, defined for $\mu$-a.e. $t\in\Omega$,
such that 
\begin{equation} 
M(E)x=\int_{E} Q(t)x \,d\mu(t),\label{eq:radon-decomposition-vector}
\end{equation}
weakly,
for all $E\in\Sigma$ and all $x\in\fM$.  
Further, if $(Q_1,\mu_1)$ and $(Q_2,\mu_2)$ are operator-valued functions defined on $\fM$ and sigma-finite measures satisfying (\ref{eq:radon-decomposition-vector}), we have the following operator equality for $(\mu_1+\mu_2)$-a.e. $t$:
\begin{equation} \label{eq:uniqueness}
Q_1(t)\frac{d\mu_1}{d(\mu_1+\mu_2)}(t)= Q_2(t) \frac{d\mu_2}{d(\mu_1+\mu_2)}(t).
\end{equation}
\end{thm*}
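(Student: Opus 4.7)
The plan is to reduce to the classical (scalar) Radon-Nikodym theorem by restricting to a countable dense subset of $\fM$, and then to reconstitute $Q(t)$ from the resulting densities as the positive operator associated with a sesquilinear form on $\fM$.

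First, I would produce a controlling measure. Fix a countable set $\{x_n\}\subset\fM$ whose $\mathbb{Q}+i\mathbb{Q}$-linear span is dense in $\fH$, and exploit the hypothesis that each $|\mu_{x_n}|$ is sigma-finite to construct a finite positive measure $\mu$ on $(\Omega,\Sigma)$ with $|\mu_{x_n}|\ll\mu$ for every $n$. Using $\|M(E)^{1/2}x\|^2=\langle M(E)x,x\rangle$ together with the uniform boundedness of $\{M(E)\}_{E\in\Sigma}$, I would promote this absolute continuity from the countable set to all of $\fM$, concluding that $|\mu_x|\ll\mu$ for every $x\in\fM$.

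Next, for $x,y\in\fM$ set $m_{x,y}(E):=\langle M(E)x,y\rangle$. By polarization $m_{x,y}\ll\mu$, so its scalar Radon-Nikodym derivative $f_{x,y}(t):=dm_{x,y}/d\mu(t)$ exists in $L^1(\mu)$. Uniqueness of scalar densities combined with the linearity of $E\mapsto M(E)x$ shows that $f_{\cdot,\cdot}$ is sesquilinear and positive semi-definite on the rational span of $\{x_n\}$ outside a single $\mu$-null set $N$, with the Cauchy-Schwarz bound $|f_{x,y}(t)|^2\le f_{x,x}(t)f_{y,y}(t)$ holding on the same set. For each $t\notin N$, I would form the pre-Hilbert space on the rational span modulo the null vectors of $f_{\cdot,\cdot}(t)$, complete it to a Hilbert space $\fH_t$, and let $J(t):\fM\to\fH_t$ denote the resulting canonical densely defined map. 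Setting $Q(t):=\overline{J(t)}^{\,*}\,\overline{J(t)}$ produces a positive self-adjoint operator on $\fH$ with $\langle Q(t)x,y\rangle=f_{x,y}(t)$ on the rational span, and integrating against $\chi_E\,d\mu$ then recovers (\ref{eq:radon-decomposition-vector}).

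For uniqueness, given pairs $(Q_1,\mu_1)$ and $(Q_2,\mu_2)$ satisfying (\ref{eq:radon-decomposition-vector}), set $\nu:=\mu_1+\mu_2$ and rewrite each integral against $\nu$ using the derivatives $h_i:=d\mu_i/d\nu$. Uniqueness of scalar Radon-Nikodym derivatives applied to the complex measures $\langle M(\cdot)x,y\rangle$ yields the scalar equality $\langle Q_1(t)x,y\rangle h_1(t)=\langle Q_2(t)x,y\rangle h_2(t)$ for $\nu$-a.e. $t$, and ranging $x,y$ over a countable dense subset promotes this to the operator identity (\ref{eq:uniqueness}).

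The principal obstacle is the construction step: producing $Q(t)$ as a closed positive operator with $\fM\subseteq D(Q(t))$ for $\mu$-a.e. $t$. The form $f_{\cdot,\cdot}(t)$ is a priori defined only on a countable rational span and is not automatically closable on $\fM$, so showing that the hypothesis of sigma-finite total variation on \emph{all} of $\fM$ (rather than on a single countable dense set) forces enough regularity to furnish a closed extension whose domain contains $\fM$, while simultaneously controlling the exceptional null sets that arise from different pairs $x,y\in\fM$, is the core technical difficulty.
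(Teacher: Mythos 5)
Your overall skeleton (controlling measure from a countable dense set, scalar Radon--Nikodym derivatives, a.e.\ sesquilinearity on a rational span, reconstruction of an operator, and scalar uniqueness for (\ref{eq:uniqueness})) matches the paper's, and your uniqueness argument is essentially the paper's. But the step you yourself flag as ``the core technical difficulty'' --- producing a closed positive $Q(t)$ with $\fM\subseteq D(Q(t))$ --- is a genuine gap, and it is exactly the step where the theorem's distinctive hypothesis must be used. Your proposal never invokes the assumption that the \emph{vector} measure $\mu_x:E\mapsto M(E)x$ has sigma-finite total variation for every $x\in\fM$; you only use scalar measures $m_{x,y}$ and their Cauchy--Schwarz bound on the countable rational span. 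That bound controls nothing for a general $x\in\fM$ at a fixed $t$, so the GNS-type construction stalls: the map $J(t)$ is defined only on the rational span, its closability in $\fH$ is not automatic, and even if $\overline{J(t)}$ existed with $\fM\subseteq D(\overline{J(t)})$, the domain of $\overline{J(t)}^{\,*}\overline{J(t)}$ is in general strictly smaller than that of $\overline{J(t)}$ and need not contain $\fM$ (or even the rational span), whereas the theorem requires $Q(t)x$ to be an honest vector for every $x\in\fM$.

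The paper closes this gap by factoring the Radon--Nikodym derivative in two stages: it writes $d\mu_{x,y}/d\mu=g_{x,y}\,g_x$ with $g_x=d|\mu_x|/d\mu$ and $g_{x,y}=d\mu_{x,y}/d|\mu_x|$, where the elementary estimate $|\mu_{x,y}|(E)\le|\mu_x|(E)\,\|y\|$ (equation (\ref{eq:not-exceed-x})) lets one take $|g_{x,y}|\le\|y\|$, and sigma-finiteness of $|\mu_x|$ lets one take $g_x$ finite everywhere. This yields, for each fixed $x\in\fM$ and $\mu$-a.e.\ $t$, the uniform bound $|\langle x,Q_0(t)y\rangle|\le g_x(t)\|y\|$ over $y$ in the countable span, so that $y\mapsto\overline{\langle x,Q_0(t)y\rangle}$ is a \emph{bounded} conjugate-linear functional and the Riesz representation theorem produces the vector $Q(t)x$ directly, for every $x\in\fM$; closedness then comes from a standard adjoint-type argument rather than from a $T^*T$ construction. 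To complete your proof you would need to add this factorization (or an equivalent pointwise bound linear in $\|y\|$ with an $x$-dependent, a.e.-finite constant); without it the passage from the countable span to all of $\fM$ does not go through.
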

\begin{proof}
Let $\mu_{x,y}:\Sigma\to\mathbb{C}$ for $x,y\in\fH$ be the complex
measure defined by $\mu_{x,y}(E)=\left\langle M(E)x,y\right\rangle $.  Let $\mu$ be any sigma-finite measure dominating each $\mu_{x,y}$.  For example, if $\{x_j\}$ is a countable dense subset of the unit ball, then we may define
\begin{equation*}
\mu(E) = \sum_{j\ge 1} \frac{1}{2^j} \mu_{x_j,x_j}(E).
\end{equation*}
Observe that
\begin{align*}
\left|\mu_{x,y}(E)\right| & =\left|\left\langle M(E)x,y\right\rangle \right|\\
 & \le\left\Vert M(E)x\right\Vert \left\Vert y\right\Vert 
\end{align*}
so that
\begin{equation}
\left|\mu_{x,y}\right|(E)\le\left|\mu_{x}\right|(E)\left\Vert y\right\Vert \label{eq:not-exceed-x}
\end{equation}
for all $x\in\fM$ and all $y\in\fH$. We use the term \emph{null set} to mean ``$\mu$-null set.''

For each $x\in\fM$, fix a Radon-Nikodym
derivative $g_{x}$ of $|\mu_{x}|$ with respect to $\mu$. The fact
that $\left|\mu_{x}\right|$ is sigma-finite implies that we may assume
$g_{x}$ is finite everywhere. For each $x,y \in\fM$,
fix a Radon-Nikodym derivative $g_{x,y}$ of $\mu_{x,y}$ with respect
to $|\mu_{x}|$. By (\ref{eq:not-exceed-x}), we may assume $g_{x,y}$
does not exceed $\left\Vert y\right\Vert $ in absolute value. Similarly, we may choose
$g_{x,x}(t)$ to be non-negative for all $t\in\Omega$. Let $f_{x,y}=g_{x,y}g_x$ for $x,y\in\fM$. Then, since both $f_{x,y}$ and $f_{y,x}$ are valid Radon-Nikodym derivatives of $\mu_{x,y}$ with
respect to $\mu$ for $x,y\in\fM$, so is the following:
\[
q(t;x,y):=\begin{cases}f_{x,y}(t), & \text{if } |f_{x,y}(t)| \le |f_{y,x}(t)| \\ 
f_{y,x}(t), & \text{else}.
\end{cases}
\]

Let $\mathbb{F}=\mathbb{Q}+i\mathbb{Q}$, and let $\fM_{0}$ be the
finite $\mathbb{F}$-linear span of a countable dense subset of $\fM$.
Assume $x,y \in \fM_0$ and $t\in \Omega$. By our choice of $q$, we have $|q(t;x,y)|\le g_{y}(t)\left\Vert x\right\Vert $.
Further, $g_{y}(t)$ is finite, so $q(t;\cdot,y)$
is bounded. Let us restrict $t$ to a null-complemented
set $\Omega_{y}\in\Sigma$ such that $q(t;\cdot,y)$ is $\mathbb{F}$-linear.  The functional $q(t;\cdot,y)$  extends to a unique $\mathbb{C}$-linear continuous functional on all of $\fH$.
%Then $q_{t,y}$ is both $\mathbb{F}$-linear and bounded on $\fM_{0}$.
%Being uniformly continuous on a dense subset of $\fH$, it has a unique
%continuous extension to $\fH$, call it $\tilde{q}_{t,y}$. 
%Further,
%taking limits and using $\mathbb{F}$-linearity of $\tilde{q}_{t,y}$ and density of $\fM_0$, we see that $\tilde{q}_{t,y}$
%is $\mathbb{C}$-linear on $\fH$. 
Thus,
by the Riesz representation theorem, for all $t\in\Omega':=\cap_{y\in\fM_{0}}\Omega_{y}$
there is a vector $z(t;y)\in\fH$ such that
% $\tilde{q}_{t,y}(x)=\left\langle x,z(t;y)\right\rangle $
%for all $x\in\fH$. Since $q_{t,y}$ and $\tilde{q}_{t,y}$ are both continuous extensions to $\fM$ of the same uniformly continuous functional on $\fM_{0}$,
%we have
\begin{equation} \label{eq:first-reduction}
q(t;x,y)=\left\langle x,z(t;y)\right\rangle 
\end{equation}
for all $x,y\in\fM_{0}$, and all $t\in\Omega'$.

Let $x\in\fM_{0}$. There is a measurable null-complemented set $\Omega_{x}'\subset\Omega'$
such that $y\in\fM_{0}\mapsto q(t;x,y)$ is $\mathbb{F}$-conjugate-linear
for all $t\in\Omega_{x}'$. Letting $\Omega''=\cap_{x\in\fM_{0}}\Omega_{x}'$,
we may assume this map is $\mathbb{F}$-conjugate-linear for all $x\in\fM_{0}$
and all $t\in\Omega''$. Thus, we have
\[
\left\langle x,z(t;ay+by')\right\rangle =\left\langle x,a\,z(t;y)+b\,z(t;y')\right\rangle 
\]
for all $x,y,y'\in\fM_{0}$, all $a,b\in\mathbb{F}$, and all $t\in\Omega''$.
Letting $x$ range over $\fM_{0}$ this gives $\mathbb{F}$-linearity
of the map $y\in\fM_{0}\mapsto z(t;y)$ for all $t\in\Omega''$. 
Thus, there is an $\mathbb{F}$-linear map $Q_0(t):\fM_0\to \fH$ such that $z(t;y)=Q_0(t)y$ for all $y\in\fM_0$.  Combining this with (\ref{eq:first-reduction}) we get
\[
q(t;x,y) = \left\langle x, Q_0(t)y \right\rangle
\]
for all $x,y\in\fM_0$, and $t\in\Omega''$.

Let $y\in\fM_0$ and $E\in\Sigma$.  We have now shown that 
\begin{equation} \label{eq:desired-result}
\mu_{x,y}(E) = \int_E \left\langle x,Q_0(t)y\right\rangle\, d\mu(t)
\end{equation}
for all $x\in \fM_0$.  It follows that
\begin{equation} \label{eq:total-variation-msr-xy}
|\mu_{x,y}|(E) = \int_E \left| \left\langle x,Q_0(t)y \right\rangle \right|\, d\mu(t)
\end{equation}
for all $x\in \fM_0$.  We now wish to show that (\ref{eq:desired-result}) extends to all $x\in\fH$. In other words, we wish to show that the functional $\phi:\fH \to \mathbb{C}$ defined by
\[
\phi(x)=\int_E \left\langle x,Q_0(t)y\right\rangle\, d\mu(t)
\]
is well-defined and satisfies $\phi(x)=\mu_{x,y}(E)$ for all $x\in\fH$.  For this, suppose $x\in\fH$ and $x_n$ is a sequence in $\fM_0$ converging to $x$. To show $\phi$ is well-defined, we first note that by Fatou's lemma we have
\begin{align} 
& \left(\int_E\left|\left\langle x,Q_0(t)y\right\rangle\right|\, d\mu(t)\right)^2 \nonumber \\
& = \left(\int_E\lim_n \left|\left\langle x_n,Q_0(t)y\right\rangle\right|\, d\mu(t)\right)^2 \nonumber \\
& \le \liminf_n \left(\int_E \left|\left\langle x_n,Q_0(t)y\right\rangle\right|\, d\mu(t)\right)^2 \nonumber 
\end{align}
By Cauchy-Schwarz, this is bounded by
\begin{align}
& \le \liminf_n \int_E \left\langle x_n,Q_0(t)x_n\right\rangle\, d\mu(t) \int_E \left\langle y,Q_0(t)y\right\rangle\, d\mu(t). \nonumber
\end{align}
Further, by (\ref{eq:desired-result}), this is equal to
\begin{align}
& \le \liminf_n \left\langle M(E)x_n,x_n\right\rangle \left\langle M(E)y,y\right\rangle \nonumber \\
& = \left\langle M(E)x,x\right\rangle \left\langle M(E)y,y\right\rangle \label{eq:continuity-xy}
\end{align}
This means that $\phi$ is well-defined.  Futher, (\ref{eq:continuity-xy}) means that $\phi$ is continuous.
%\begin{equation} \label{eq:main-ineq-2}
%\int_E \left|\left\langle x, Q_0(t) y\right\rangle\right|\, d\mu(t) \le \left\Vert M(E) \right\Vert \left\Vert x \right\Vert \left\Vert y \right\Vert.
%\end{equation}
%Thus, $\phi$ is well-defined.  Equation (\ref{eq:main-ineq-2}) also means that $\phi$ is continuous.  
Since $x\in\fH\mapsto \mu_{x,y}(E)$ is also continuous and restricts to $\phi$ on $\fM_0$, we have $\phi(x)=\mu_{x,y}(E)$, as desired.

We will now show that there is a closed, positive operator-valued function $Q(t)$ with domain $\fM$ such that
\begin{equation} \label{eq:final-desired-result}
\mu_{x,y}(E) = \int_E \left\langle Q(t)x,y\right\rangle\, d\mu(t),
\end{equation}
for all $x\in\fM$, all $y\in\fM_0$, and all $E\in\Sigma$. For this, let $x\in\fM$, $y\in\fM_0$, and $E\in\Sigma$.  By the conclusion of the last paragraph and the definition of total-variation measure, we have:
% Since (\ref{eq:desired-result}) has been extended to hold for all $x\in\fM$, we may use the definition of the total-variation measure to obtain the following:
\[
 \int_E \left| \left\langle x, Q_0(t)y\right\rangle\right| \, d\mu(t)= |\mu_{x,y}|(E).
\]
Using (\ref{eq:not-exceed-x}), the right hand side is bounded by $|\mu_x|(E)\left\Vert y\right\Vert$.  But since $E$ was arbitrary, this means that
\[
\left| \left\langle x, Q_0(t)y\right\rangle\right| \le g_x(t) \left\Vert y\right\Vert,
\]
for every $t$ in a null-complemented set $\Omega'''$. Fix $t\in\Omega'''$. By finiteness of $g_x(t)$, the above display equation means that if $y_n$ is a sequence in $\fM_0$ converging to $y$ we have
%It remains to extend (\ref{eq:desired-result}) to all $x,y\in\fM$.  Let $x\in\fM$ and $t\in \Omega''$.  
%If $y\in\fM_0$, 
%Then if $y\in\fM_0$ we have $ \left| \left\langle x, Q_0(t)y \right\rangle \right| \le g_x(t)\left\Vert y\right\Vert$.  Thus, if $y_n$ is a sequence in $\fM_0$ converging to $y$, we have
\[
\left\langle x, Q_0(t) y_n\right\rangle - \left\langle x, Q_0(t)y \right\rangle \to 0.
\]
Thus, by the Riesz representation theorem, there is a densely defined operator $Q(t)$ with domain $\fM$ such that 
\[
\left\langle x, Q_0(t)y \right\rangle = \left\langle Q(t)x, y\right\rangle. 
\]
 Further, $Q(t)$ is closed  by \cite[5.1.5]{pedersen2012analysis} and positive since $Q_0(t)$ is positive.
 
Let $x\in\fM$ and $E\in\Sigma$. We will now argue that (\ref{eq:final-desired-result}) extends to all $y\in\fH$.  For this we define
$\psi:\fH\to\mathbb{C}$ by
 \[
 \psi(y) = \int_E \left\langle Q(t)x, y\right\rangle\, d\mu(t).
 \]
 It suffices to show that $\psi$ is well-defined and agrees with $\mu_{x,y}(E)$ for all $y\in\fH$.  But this follows from the extension argument just applied to $\phi:\fM_0\to\mathbb{C}$ in the paragraph before last.
 This concludes the proof of (\ref{eq:radon-decomposition-vector}).
% is well-defined and agrees with $\mu_{x,y}(E)$ for all $y\in\fH$.  But the same argument used to $\phi(x)=\mu_{x,y}(E)$ extends from $x\in\fM_0$ to $x\in\fH$ can be used to show continuously from $\fM_0$ to $\fH$ just as $\phi$ doesthis follows by imitating the arguments above about $\phi$ for $\psi$.  This concludes the existence proof.

For (\ref{eq:uniqueness}), suppose that
\begin{equation*}
\int_E \left\langle Q_1(t)x,y \right\rangle\, d\mu_1(t) = \int_E \left\langle Q_2(t)x,y\right\rangle\, d\mu_2(t),
\end{equation*}
for all $E\in \Sigma$ and all $x\in\fM$ and $y\in\fH$.
Then for $(\mu_1+\mu_2)$-a.e. $t$ and all $x\in \fM$ and $y\in\fH$, we have
\begin{equation*}
\left\langle Q_1(t)x,y \right\rangle \frac{d\mu_1}{d(\mu_1+\mu_2)}(t) = \left\langle Q_2(t)x,y\right\rangle \frac{d\mu_2}{d(\mu_1+\mu_2)}(t) .
\end{equation*}
But this means
\begin{equation*}
Q_1(t)\frac{d\mu_1}{d(\mu_1+\mu_2)}(t) = Q_2(t)\frac{d\mu_2}{d(\mu_1+\mu_2)}(t),\end{equation*}
as desired.

\end{proof}

The above yields the sigma-finite case (\cite[Theorem~3.3.2]{robinson2014operator}) as an immediate corollary.  Here, we say that $E\mapsto M(E)$ is sigma-finite if its total variation with respect to the operator norm is sigma-finite.

\begin{cor*}
Suppose $(\Omega, \Sigma, M)$ is a sigma-finite POVM. Then $\mu_x$ is sigma-finite for all $x\in \fH$ and $Q(t)$ is bounded.
%As a result, there is an operator-valued function $\overline{Q}(t):\fH\to\fH$ such that for all $E\in \Sigma$ and all $x,y\in \fH$
%\begin{equation} \label{eq:sesquilinear}
%\left\langle M(E)x,y \right\rangle = \int_E \left\langle \overline{Q}(t) x, y \right\rangle,
%\end{equation}
%and $\overline{Q}(t)$ satisfies the same uniqueness condition as before.
\end{cor*}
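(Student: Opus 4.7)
The approach is to take $\fM = \fH$ and apply the main theorem, but to make a careful choice of the sigma-finite dominating measure $\mu$ in its proof: specifically $\mu := \|M\|$, the operator-norm total variation of $M$, which is sigma-finite by hypothesis.

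First I would verify the sigma-finiteness of $\mu_x$. For $x \in \fH$ and any measurable partition $\{E_i\}$ of $E \in \Sigma$,
\[
\sum_i \|M(E_i)x\| \le \|x\|\sum_i \|M(E_i)\| \le \|x\|\cdot\|M\|(E),
\]
so $|\mu_x|(E) \le \|x\|\cdot\|M\|(E)$. Since $\|M\|$ is sigma-finite, so is $|\mu_x|$, via the same exhausting sequence. This gives the first conclusion of the corollary and shows that the main theorem applies with $\fM = \fH$. An analogous estimate yields $|\mu_{x,y}|(E) \le \|x\|\|y\|\cdot\|M\|(E)$, so $\mu := \|M\|$ is itself a sigma-finite measure dominating every $\mu_{x,y}$ and is therefore a legitimate choice of dominating measure in the construction of the main theorem.

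With this $\mu$, the Radon-Nikodym derivative $g_x = d|\mu_x|/d\mu$ from the proof of the main theorem satisfies $g_x(t) \le \|x\|$ for $\mu$-a.e.\ $t$, because $|\mu_x| \le \|x\|\mu$. Taking a countable union of the exceptional null sets over $\fM_0$, we may assume $g_x(t) \le \|x\|$ simultaneously for all $x \in \fM_0$ on a single conull set. The function $q(t;x,y)$ from the proof then obeys
\[
|q(t;x,y)| \le \min\bigl(g_y(t)\|x\|,\; g_x(t)\|y\|\bigr) \le \|x\|\|y\|,
\]
and via the identity $q(t;x,y) = \langle x, Q_0(t) y \rangle$ this yields $|\langle x, Q_0(t) y \rangle| \le \|x\|\|y\|$ for all $x, y \in \fM_0$. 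By density of $\fM_0$ in $\fH$, the map $Q_0(t)$ extends uniquely to a bounded operator on $\fH$ of norm at most $1$, which is self-adjoint because it is bounded and positive (positivity comes from $q(t;x,x) \ge 0$). Comparing with the defining identity $\langle x, Q_0(t) y \rangle = \langle Q(t) x, y \rangle$ for $Q(t)$, we conclude that $Q(t) = Q_0(t)$, so $Q(t)$ is bounded.

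The principal thing to check is that the proof of the main theorem really does go through unchanged when the specific measure constructed there is replaced by $\mu = \|M\|$. But that proof was written to accommodate any sigma-finite $\mu$ dominating each $\mu_{x,y}$, so this step reduces to rereading the argument with the extra pointwise control on $g_x$ that the choice $\mu = \|M\|$ provides.
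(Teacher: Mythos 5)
Your proposal is correct and takes essentially the same approach as the paper: choose the dominating measure $\mu$ to be the operator-norm total variation of $M$, deduce sigma-finiteness of $\mu_x$ from $|\mu_x|(E)\le\|x\|\,\mu(E)$, and use $|\mu_{x,y}|(E)\le\|x\|\|y\|\,\mu(E)$ to bound the density. The only difference is that you extract the bound $|\langle x,Q_0(t)y\rangle|\le\|x\|\|y\|$ by tracing through the internal objects $g_x$ and $q$ of the theorem's construction (and you are somewhat more explicit about collecting the exceptional null sets over the countable set $\fM_0$), whereas the paper applies the same domination directly to the final integral identity for $\langle Q(t)x,y\rangle$ and invokes the Radon--Nikodym theorem.
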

\begin{proof}
%It is easily shown that $E\in\Sigma\mapsto M(E)x$ is sigma-finite for all $x\in \fH$ because
%\begin{align*}
%\left\Vert M(E) x\right\Vert \le \left\Vert M(E) \right\Vert \left\Vert x \right\Vert
%\end{align*}
%and because the latter is bounded by the total variation measure of $M$.
We may replace $\mu$ in the previous proof by the total variation measure of $M$ since it dominates $\mu_{x,y}$ for all $x,y\in\fH$. 
 Further, $\mu(E)\left\Vert x \right\Vert$ dominates the total variation measure of $E\mapsto M(E)x$, so the latter is sigma-finite for all $x\in\fH$.
%Let $\mu_0:\Sigma \to [0,\infty]$ be the total variation of $M$.  For the first part, simply observe that
%\begin{align*}
%\left\Vert M(E)x\right\Vert & \le \left\Vert M(E) \right\Vert \left\Vert x \right\Vert \\
%& \le \mu_0(E) \left\Vert x \right\Vert,
%\end{align*}
%so that the total variation of $E\mapsto M(E)x$ is sigma-finite.
By the definition of the total-variation measure of $\mu_{x,y}$ and $M$, we have
\begin{align*}
\int_E \left|\left\langle Q(t)x,y\right\rangle\right|\, d\mu(t) & \le \mu(E) \left\Vert x \right\Vert \left\Vert y \right\Vert 
\end{align*}
for all $E\in\Sigma$ and all $x,y\in\fH$.  Thus, the Radon-Nikodym theorem tells us that for $\mu$-a.e. $t$ and all $x,y\in \fH$,
\[
\left|\left\langle Q(t)x,y \right\rangle\right| \le \left\Vert x \right\Vert \left\Vert y \right\Vert,
\]
which means that $Q(t)$ is bounded for $\mu$-a.e. $t$.  
%Thus, $Q(t)$ extends to a unique operator $\overline{Q}(t)$ on $\fH$.  Finally, the same argument used in the Theorem to extend the functional \ref{eq:functional-in-y} from $\fM_0$ to $\fM$ can be used to extend the sesquilinear form on the right side of (\ref{eq:sesquilinear}) from $\fM\times \fM$ to $\fH\times \fH$.
\end{proof}

It follows immediately from the Theorem that any framed POVM $M$ satisfying the given sigma-finiteness condition arises from an OVF, as desired.  Indeed, letting $\mu$, $Q(t)$, and $\fM$ be as in the Theorem, such an OVF is $(\mu, \fM, \{T(t)\}_{t\in\Omega})$, where $T(t)=Q(t)^{1/2}$.  Further, the uniqueness condition of the Theorem shows that if $(\mu_1, \fM, \{T_1(t)\}_{t\in\Omega})$ and $(\mu_2,\fM, \{T_2(t)\}_{t\in\Omega})$ are any two OVFs giving rise to $M$, then they are essentially the same in the sense that 
$$
T_1(t)^*T_1(t)\frac{d\mu_1}{d(\mu_1+\mu_2)}(t)=T_2(t)^*T_2(t)\frac{d\mu_2}{d(\mu_1+\mu_2)}(t)
$$
for $(\mu_1+\mu_2)$-a.e. $t$. This concludes our argument.

%Now that we have established the Theorem above, the answer to the question of the last section is simple. If $M$ is a POVM meeting the conditions of the Theorem, we may use the spectral theorem for unbounded self-adjoint operators to define $T(t)$ to be the map $Q(t)^{1/2}$ from $\fM$ to $\fH$.  In this case, if $M(\Omega)$ is additionally bounded below, then $\{T(t)\}_{t\in\Omega}$ is an OVF, as desired.

\bibliographystyle{plain}
\bibliography{thesisbib}

\end{document}